\providecommand{\customgenericname}{}
\newcommand{\newcustomtheorem}[2]{%
  \newenvironment{#1}[1]
  {%
   \renewcommand\customgenericname{#2}%
   \renewcommand\theinnercustomgeneric{##1}%
   \innercustomgeneric
  }
  {\endinnercustomgeneric}
}
\theoremstyle{definition}
\theoremstyle{remark}
\numberwithin{equation}{section}
\theoremstyle{plain}
\newtheoremstyle{example}
  {\topsep}
  {\topsep}
  {}
  {}
  {\itshape}
  {}
  {.5em}
  {\thmname{#1}\thmnumber{ #2}\thmnote{ (#3)}}
\theoremstyle{example}
\begin{document}

\title[Non-maximal Closed Prime ideals Are Accessible]{Non-maximal Closed Prime ideals In  A Unital Commutative Banach Algebra Are Accessible}


\author{Ramesh V. Garimella}
\address{Department of Mathematics, Drexel University, Philadelphia, PA, 19104 USA }
\email{rg948@drexel.edu}



\dedicatory{}
\keywords{Banach algebras, Prime ideals, Epimorphism, Integral Domain}%
\subjclass [2000]{Primary 46J20; Secondary 46H40}


\begin{abstract}
It is proved that in a commutative unital Banach algebra, every non-maximal closed prime ideal is accessible. Specifically, it can be represented as the intersection of all closed ideals of the algebra that properly contain it. Consequently, all derivations and epimorphisms on commutative unital semi-prime Banach algebras are continuous. Moreover, any separating ideal in a commutative unital Banach algebra is nilpotent and, therefore, a nil ideal.
\end{abstract}
\maketitle
\section{Introduction}
One key aspect of automatic continuity problems involves identifying conditions on the domain and range of Banach algebras that guarantee the continuity of a homomorphism. One of the earliest automatic continuity results for homomorphisms is attributed to Šilov \cite{silovregular}, who proved that every homomorphism from a Banach algebra $\mathcal{A}$ to a commutative semisimple Banach algebra $\mathcal{B}$ is continuous. Recall that a homomorphism $h$ is a linear operator on $\mathcal{A}$ satisfying $h(xy) = h(x)h(y)$ for all $x, y \in \mathcal{A}$. In \cite{bade1978prime}, Bade and Curtis gave sufficient conditions for the commutative Banach algebras $\mathcal{A}$ and $\mathcal{B}$ to ensure that a homomorphism $h: \mathcal{A} \rightarrow \mathcal{B}$ is continuous. \\  

Recall that a derivation $D$ is a linear map on $\mathcal{A}$ that satisfies $D(ab) = aD(b) + bD(a)$ for all $a, b \in \mathcal{A}$. One of the earliest results on derivations is due to Singer and Wermer \cite{singer1955derivations}, who proved that every continuous derivation on a commutative Banach algebra $\mathcal{A}$ maps into the Jacobson radical of the algebra. They also conjectured that the image of a derivation on any commutative Banach algebra, irrespective of continuity, maps into the Jacobson radical. In 1988, Marc P. Thomas proved \cite{thomas1988image} Singer and Wermer's conjecture. One of the earliest automatic continuity results for derivations was established by B.E. Johnson \cite{johnson1969continuity}, who proved that every derivation on a semisimple Banach algebra is continuous.\\   

For an epimorphism $h$ from $\mathcal{B}$ onto $\mathcal{A}$, the set $\mathcal{S}(h) := \{x \in \mathcal{A} : \exists x_n \rightarrow 0 \text{ in } \mathcal{B}, \text{ with } h(x_n) \rightarrow x \text{ in } \mathcal{A}\}$ is known as the separating ideal of $h$. Similarly, for any derivation $D$ on $\mathcal{A}$, $\mathcal{S}(D) := \{x \in \mathcal{A} : \exists x_n \rightarrow 0 \text{ in } \mathcal{A}, \text{ with } D(x_n) \rightarrow x \text{ in } \mathcal{A}\}$ is the separating ideal of $D$. The separating ideal of an epimorphism or a derivation is a closed ideal of the commutative Banach algebra. By the closed graph theorem, an epimorphism $h: \mathcal{A} \rightarrow \mathcal{B}$ or a derivation $D$ on $\mathcal{A}$ is continuous if and only if the separating ideal is zero. Let $\mathcal{I}$ be a separating ideal of an epimorphism or a derivation as defined above. It was shown in \cite{sinclair1976automatic} that $\mathcal{I}$ satisfies the following stability property: if for each sequence $\{a_n\}$ in $\mathcal{A}$, there exists a natural number $N$ such that if $n > N$, then 
\begin{equation} 
  \label{*} \overline {a_1 \cdots a_n \mathcal{I}} = \overline{a_1 \cdots a_N \mathcal{I}}   
\end{equation}

\noindent where $\overline{J}$ denotes the closure of $J$.\\

Throughout this paper, we assume that all Banach algebras are infinite-dimensional, commutative, and \textit{unital}. Recall that an ideal $P$ in a commutative unital algebra $\mathcal{B}$ is said to be prime if $ab \in P$ implies either $a \in P$ or $b \in P$. It is easy to prove that every maximal ideal of a commutative Banach algebra is prime. A commutative unital Banach algebra is said to be an \textit{integral domain} if the zero ideal is a prime ideal, and the algebra is called \textit{semiprime} if the intersection of all prime ideals is the zero ideal. An element $x$ in a Banach algebra is said to be nilpotent if $x^n = 0$ for some positive integer $n$. An ideal $I$ is said to be \textit{nilpotent} if every element of $I$ is nilpotent. An ideal $I$ is said to be \textit{nil} if $I^n = \{0\}$. Obviously, a nilpotent ideal is contained in every prime ideal of the algebra. \\

Cusack \cite{cusack1977automatic} and Jewell \cite{jewell1977continuity} were the first to establish the connection between non-maximal prime ideals and the automatic continuity of homomorphisms and derivations on commutative Banach algebras. A detailed study of prime ideals in some commutative Banach algebras, especially in the algebra of continuous functions vanishing at infinity on a locally compact space, can be found in Dales' monograph \cite{dales2000banach}. Additionally, we refer to papers by Pham (\cite{pham2008kernels}, \cite{le2010kernels}) on prime ideals and automatic continuity. \\ 

A non-maximal closed prime ideal $P$ is said to be \textit{accessible} if it is the intersection of all closed ideals of the algebra properly containing $P$. Otherwise, the closed prime ideal is said to be \textit{inaccessible}. Accessible prime ideals were first introduced by P.C. Curtis, Jr. \cite{Curtis1981Derivations} to study the image of derivations on commutative Banach algebras with a unique maximal ideal. In fact, he proved that a closed ideal satisfying the stability property \eqref{*} is contained in every accessible prime ideal of the Banach algebra. It was not known whether inaccessible closed prime ideals exist in a commutative Banach algebra.\\

In this paper, using the \textit {Zorn's Lemma}  which is equivalent to \textit{the axiom of choice}, it is proved that all non-maximal closed prime ideals in a unital commutative Banach algebra are accessible. In other words, inaccessible prime ideals do not exist in commutative unital Banach algebras. As a result, affirmative answers are provided for the following seemingly open questions, where questions (1) through (3) are known to be equivalent:
\begin{enumerate}
    \item Is every derivation on a commutative unital Banach algebra that is an integral domain continuous?
    \item Is every derivation on a commutative unital semiprime Banach algebra continuous?
    \item Is the separating ideal of any derivation on a commutative unital Banach algebra nilpotent?
    \item Is every epimorphism from a commutative unital Banach algebra onto an integral domain continuous?
    \item Is every epimorphism from a commutative unital Banach algebra onto a semiprime commutative unital Banach algebra continuous?
    \item Is the separating ideal of any derivation on a commutative unital Banach algebra nilpotent?
    \item Is the separating ideal of an epimorphism from a commutative unital Banach algebra to another commutative unital Banach algebra nilpotent?
\end{enumerate}

\section{Preliminaries}
The following result holds even for commutative rings with identity. Therefore, we state it in a general form.

\begin{prop}{2.1}
\label{prop1}
Let $\mathcal{R}$ be a commutative ring with identity, and let $P$ be a non-maximal prime ideal of $\mathcal{R}$. Then either $P$ is equal to the intersection of all prime ideals of $\mathcal{R}$ properly containing $P$, or there exists a sequence $\{ I_n \}$ of nonzero ideals such that $P = \bigcap_{n = 1}^{\infty} I_n$. 
\end{prop}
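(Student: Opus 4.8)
The plan is to pass to the quotient ring $D := \mathcal{R}/P$, which is an integral domain (as $P$ is prime) that fails to be a field (as $P$ is non-maximal), and to exploit the standard order-preserving bijection between prime ideals of $\mathcal{R}$ containing $P$ and prime ideals of $D$. Under this bijection the prime ideals of $\mathcal{R}$ properly containing $P$ correspond precisely to the nonzero prime ideals of $D$, so if $\pi \colon \mathcal{R} \to D$ denotes the quotient map and $Q := \bigcap\{\mathfrak{q} : \mathfrak{q} \text{ prime}, \mathfrak{q} \supsetneq P\}$, then $\pi(Q) = \bigcap\{\text{nonzero primes of } D\}$ and $Q \supseteq P$. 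Since $D$ is a nonzero ring it has a maximal ideal, and since $D$ is not a field that maximal ideal is nonzero; hence nonzero primes of $D$ exist and $Q$ is a genuine intersection. The whole argument now splits according to whether $Q = P$.

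If $Q = P$, then $P$ is the intersection of all prime ideals properly containing it, which is the first alternative. So assume $Q \supsetneq P$ and fix $x \in Q \setminus P$; write $\overline{x} = \pi(x)$, a nonzero element of $D$ that lies in every nonzero prime ideal of $D$. In particular $\overline{x}$ lies in every maximal ideal, so $\overline{x}$ is a non-unit. I would then analyze the localization $D[1/\overline{x}]$: its prime ideals correspond to the primes of $D$ disjoint from the powers of $\overline{x}$, that is, to the primes of $D$ not containing $\overline{x}$, and since every nonzero prime of $D$ contains $\overline{x}$, the only such prime is $(0)$. A nonzero commutative unital ring whose unique prime ideal is $(0)$ is a field; since $D[1/\overline{x}]$ is a subring of $\mathrm{Frac}(D)$ containing $D$, being a field it must equal $\mathrm{Frac}(D)$.

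The crux is then to show $\bigcap_{n=1}^{\infty} (\overline{x}^{\,n}) = (0)$ in $D$, and here the identity $D[1/\overline{x}] = \mathrm{Frac}(D)$ does the work. Suppose $c$ were a nonzero element lying in every $(\overline{x}^{\,n})$. Since $c \neq 0$ in the field $\mathrm{Frac}(D) = D[1/\overline{x}]$, its inverse has the form $1/c = d/\overline{x}^{\,m}$ with $d \in D$, giving $\overline{x}^{\,m} = cd$; writing $c = \overline{x}^{\,m+1} d'$ from $c \in (\overline{x}^{\,m+1})$ and cancelling $\overline{x}^{\,m}$ in the domain $D$ yields $1 = \overline{x}\, d' d$, forcing $\overline{x}$ to be a unit, a contradiction. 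Finally I would pull the principal ideals back to $\mathcal{R}$ by setting $I_n := \pi^{-1}\bigl((\overline{x}^{\,n})\bigr)$; each $I_n$ is an ideal containing $x^n \neq 0$, hence nonzero, and since preimages commute with arbitrary intersections, $\bigcap_{n} I_n = \pi^{-1}\bigl(\bigcap_n (\overline{x}^{\,n})\bigr) = \pi^{-1}(0) = P$, which is the second alternative. The main obstacle is precisely the intersection step $\bigcap_n (\overline{x}^{\,n}) = (0)$: in a general domain a nonzero element can be infinitely divisible by a non-unit, so it is essential to use that $\overline{x}$ lies in every nonzero prime, equivalently that inverting it already yields the full field of fractions, in order to rule this out.
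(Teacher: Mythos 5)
Your proof is correct; every step checks out, including the subtle points (nonzero primes of $D$ exist because $D$ is not a field, $\overline{x}$ is a non-unit, and the preimages $I_n$ are nonzero since $x^n \notin P$). It also shares its skeleton with the paper's argument: reduce to the quotient domain $D=\mathcal{R}/P$, note that either the nonzero primes of $D$ intersect to zero (the first alternative) or some nonzero $\overline{x}$ lies in every nonzero prime, and in that case prove $\bigcap_{n}(\overline{x}^{\,n})=(0)$ and pull the principal ideals back along $\pi$ --- even your terminal contradiction ($\overline{x}^{\,m}=\overline{x}^{\,m+1}d'd$, cancel $\overline{x}^{\,m}$ to make $\overline{x}$ a unit) is the same computation the paper performs. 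Where you genuinely diverge is in the key lemma justifying $\bigcap_{n}(\overline{x}^{\,n})=(0)$. The paper works in $\mathcal{R}$ itself, splitting into the cases $P=\{0\}$ and $P\neq\{0\}$, and invokes Krull's lemma: an ideal maximal with respect to missing the multiplicative set $\{1,x,x^{2},\dots\}$ is prime, hence must equal $\{0\}$, so the intersection, were it nonzero, would have to meet the powers of $x$. You instead use the localization correspondence: since every nonzero prime of $D$ contains $\overline{x}$, the ring $D[1/\overline{x}]$ has $(0)$ as its unique prime, is therefore a field, and so equals $\mathrm{Frac}(D)$; the representation $1/c=d/\overline{x}^{\,m}$ of inverses then manufactures $\overline{x}^{\,m}\in(\overline{x}^{\,m+1})$. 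The two tools are close cousins, and both ultimately rest on Zorn's lemma (Krull's lemma there; existence of maximal ideals, needed to know that a nonzero ring whose only prime is $(0)$ is a field, here). Your route buys a uniform treatment with no case split and the structural observation that inverting this single element already yields the whole fraction field; the paper's route is more economical, needing only one standard citation and no localization machinery.
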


\begin{proof} 
We divide the proof into two cases. \\

\textbf{Case I:} Assume $P = \{ 0 \}$. Further suppose the intersection of all nonzero prime ideals of $\mathcal{R}$ is not zero. Let $x$ be a nonzero element in the intersection of all nonzero prime ideals of $\mathcal{R}$. By our assumption, such a nonzero element $x$ exists. Since $\{ 1, x, x^2, \dots \}$ is a multiplicatively closed set not intersecting $\{ 0 \}$, by \textit{Zorn's Lemma}, there exists an ideal $J$ that is maximal with respect to not intersecting $\{ 1, x, x^2, \dots \}$. Moreover, such an ideal $J$ is prime \cite{hungerford2012abstract}. Since $x$ belongs to every nonzero prime ideal of $\mathcal{R}$, we have $J = \{ 0 \}$. Let $I_n = \langle x^n \rangle$, the principal ideal generated by $x^n$ for each $n \geq 1$. To complete the proof, we now show that $\bigcap I_n = \{ 0 \}$. If $\bigcap I_n \neq \{ 0 \}$, then by the maximality of $J$, $\bigcap I_n$ must intersect $\{ x, x^2, x^3, \dots \}$. Thus, there exists $m \geq 1$ such that $x^m \in \bigcap I_n$, which implies $x^m \in \langle x^{m+1} \rangle$. Hence, $x^m = c x^{m+1}$ for some $c \in \mathcal{R}$. Since $\mathcal{R}$ is an integral domain, it follows that $x$ is invertible, which is a contradiction. This completes the proof for this case. \\

\textbf{Case II:} Suppose $P$ is a nonzero prime ideal. Then we apply Case I to the quotient ring $\mathcal{R}/P$ to prove the result.   
\end{proof}

The following technical lemma is useful in the proof of the main result.

\begin{lemma}{2.2}
\label{lemma2}
Let $\mathcal{B}$ be a commutative unital Banach algebra that is an integral domain. Suppose there is a sequence of nonzero ideals intersecting to the zero ideal. Then the zero ideal is an accessible prime ideal of $\mathcal{B}$. 
\end{lemma}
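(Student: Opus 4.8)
The plan is to reduce accessibility to a concrete construction and then isolate the one genuinely analytic step. Since $\mathcal{B}$ is a domain the zero ideal is prime, and since an infinite-dimensional Banach algebra cannot be a field (Gelfand--Mazur), $\{0\}$ is non-maximal, so the notion of accessibility applies. To prove $\{0\}$ accessible it suffices to exhibit a \emph{sequence} of nonzero closed ideals whose intersection is $\{0\}$: any such sequence lies among the closed ideals properly containing $\{0\}$, so the full intersection defining accessibility is squeezed down to $\{0\}$. I would first dispose of the semisimple case, where the hypothesis is not even needed: if the Jacobson radical of $\mathcal{B}$ is $\{0\}$, then $\{0\}$ is the intersection of all maximal ideals, and maximal ideals of a unital Banach algebra are closed and (being nonzero) properly contain $\{0\}$. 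Thus the real content of the lemma lies in the case of a nonzero radical, and this is exactly where the hypothesis on the sequence of nonzero ideals must enter.

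So assume a sequence $\{I_n\}$ of nonzero ideals with $\bigcap_n I_n=\{0\}$. Using that $\mathcal{B}$ is a domain --- so that $0\ne ab\in I\cap J$ whenever $0\ne a\in I$, $0\ne b\in J$, forcing every finite intersection of nonzero ideals to be nonzero --- I would replace $I_n$ by $I_1\cap\cdots\cap I_n$ and thereby assume the sequence is decreasing with each term nonzero. Invoking Proposition \ref{prop1} one may even take $I_n=\langle x^n\rangle$ for a fixed noninvertible $x\ne 0$ with $\bigcap_n x^n\mathcal{B}=\{0\}$; note that $x$, being noninvertible, lies in some maximal ideal $M$, so $\overline{x\mathcal{B}}\subseteq M\subsetneq\mathcal{B}$. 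The natural candidate family is then $J_n:=\overline{I_n}$, a decreasing sequence of nonzero closed ideals, and the lemma reduces to the single assertion $\bigcap_n\overline{x^n\mathcal{B}}=\{0\}$.

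The crux --- and the step I expect to be the main obstacle --- is that passing to closures can strictly enlarge the ideals, since multiplication by $x$ need not have closed range, so the algebraic fact $\bigcap_n x^n\mathcal{B}=\{0\}$ does not formally yield $\bigcap_n\overline{x^n\mathcal{B}}=\{0\}$. I would attack this by contradiction. Set $K:=\bigcap\{J: J\text{ a nonzero closed ideal of }\mathcal{B}\}$ and observe that $\{0\}$ is accessible if and only if $K=\{0\}$. Suppose $K\ne\{0\}$. Then $K$ is a \emph{minimal} nonzero closed ideal contained in every nonzero closed ideal; consequently $\overline{a\mathcal{B}}\supseteq K$ for every $a\ne 0$, while $\overline{k\mathcal{B}}=K$ and $\overline{kK}=K$ for every $0\ne k\in K$ (each of $k\mathcal{B}$ and $kK$ is a nonzero ideal sitting inside $K$, so its closure is trapped between $K$ and $K$). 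In particular $K\subseteq\overline{x^n\mathcal{B}}$ for all $n$, and the multiplication operator $M_x\colon K\to K$ is injective (domain) with dense range ($\overline{xK}=K$). The aim is to upgrade dense range to \emph{surjectivity}: once $xK=K$, one gets $K=x^nK\subseteq x^n\mathcal{B}$ for all $n$, whence $K\subseteq\bigcap_n x^n\mathcal{B}=\{0\}$, the desired contradiction.

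Establishing $xK=K$ is the heart of the matter and the place where genuine Banach-space input is unavoidable: an injective bounded operator with dense range need not be surjective, so this cannot be purely formal. I would try to force surjectivity from the minimality of $K$ --- for instance by showing that $xK$, being a nonzero ideal contained in $K$, is in fact closed (so $xK=K$ by minimality), or by running a Mittag--Leffler/Baire-category successive-approximation scheme along the descending chain $\overline{x^nK}=K$ to solve the division problem $k=xk'$ with $k'\in K$ exactly rather than merely approximately. Securing a uniform bound for this approximate division --- equivalently, closed range for $M_x$ on $K$ --- is the decisive technical hurdle on which the entire argument rests.
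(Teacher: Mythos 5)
Your reduction and your diagnosis of where the difficulty lies are both sound, but the proof has a genuine gap at exactly the step you flag as ``the decisive technical hurdle'': you never establish it, and the route you propose for closing it cannot work as stated. The goal $xK=K$ (equivalently, closed range of $M_x$ on $K$) is strictly stronger than what is needed, and nothing in the hypotheses forces it: an injective multiplication operator with dense range on $K$ need not be bounded below, and minimality of $K$ gives no handle on closedness of $xK$. The way this step is actually closed --- and this is the content of Esterle's Remarks 2.2 and 2.3, which is all that the paper's own one-line proof cites --- is the Mittag--Leffler theorem on inverse limits of complete metric spaces, and the point you are missing is that Mittag--Leffler does \emph{not} solve the division problem $k=xk'$ for a \emph{given} $k\in K$ (that would indeed require closed range); it produces a \emph{dense} set of elements of $K$ that admit infinite chains of divisions, and that weaker conclusion already finishes the proof. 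Concretely: assume $K\neq\{0\}$, pick $0\neq a_m\in I_m$ for each $m$, and apply Mittag--Leffler to the inverse system $X_m=K$ with bonding maps $b\mapsto a_m b$; each bonding map is continuous with dense range (your own observation $\overline{a_mK}=K$ applies verbatim with $a_m$ in place of $k$), so the set of $b_1\in K$ admitting a thread $b_m=a_m b_{m+1}$ is dense in $K$. But any such $b_1$ equals $a_1a_2\cdots a_m b_{m+1}\in I_1\cap\cdots\cap I_m$ for every $m$, hence lies in $\bigcap_n I_n=\{0\}$; density then gives $K=\{0\}$, a contradiction. No uniform bound on approximate division is needed anywhere: the successive-approximation proof of Mittag--Leffler rescales the admissible error at each stage using only continuity of each bonding map at a point.

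A secondary flaw: your appeal to Proposition 2.1 to assume $I_n=\langle x^n\rangle$ for a single noninvertible $x$ is not justified. That proposition is a dichotomy --- either $\{0\}$ is the intersection of the nonzero prime ideals, or some sequence of nonzero ideals meets in $\{0\}$ --- and its principal-ideal construction operates only under the assumption that the intersection of all nonzero prime ideals is nonzero; your hypothesis does not allow you to force that case, and in the opposite case no principal sequence is provided (nor would knowing $\{0\}$ is an intersection of primes help directly, since those primes need not be closed). This flaw turns out to be immaterial once you run the Mittag--Leffler argument above, because it works with the given ideals $I_n$ directly, using the products $a_1\cdots a_m$ in place of the powers $x^m$; no reduction to the principal case is needed.
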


\begin{proof}
The result follows from Remarks 2.2 and 2.3 in \cite{esterle1983elements}.
\end{proof}

The condition stated in the above lemma is a necessary requirement in the case of commutative unital Banach algebras that are separable. A precise formulation of this statement, along with its proof, is provided in the following.

\begin{prop}{2.3}
\label{prop3}
Let $\mathcal{A}$ be a separable commutative Banach algebra. A non-maximal closed prime ideal is accessible \textit{iff} it is the countable intersection of ideals properly containing it.   
\end{prop}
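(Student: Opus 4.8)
The plan is to reduce everything to the zero ideal of the quotient algebra. Since $P$ is a closed prime ideal, $\mathcal{B} := \mathcal{A}/P$ is a commutative unital Banach algebra that is an integral domain, and it is again separable because a quotient of a separable Banach space is separable. Writing $\pi : \mathcal{A} \to \mathcal{B}$ for the quotient map, I would first record the standard lattice correspondence: $I \mapsto \pi(I)$ is a bijection between closed ideals of $\mathcal{A}$ containing $P$ and closed ideals of $\mathcal{B}$, under which ideals properly containing $P$ correspond to nonzero ideals and which commutes with intersections (because every ideal in sight contains $P$, so $\pi(\bigcap_\alpha I_\alpha) = \bigcap_\alpha \pi(I_\alpha)$). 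Under this correspondence, $P$ being accessible in $\mathcal{A}$ is exactly the statement that $\{0\}$ is the intersection of all nonzero closed ideals of $\mathcal{B}$, and $P$ being a countable intersection of ideals properly containing it is exactly the statement that $\{0\}$ is a countable intersection of nonzero ideals of $\mathcal{B}$. Thus it suffices to prove the proposition for $P = \{0\}$ in the separable integral domain $\mathcal{B}$.

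For the direction that a countable intersection implies accessibility, suppose $\{0\} = \bigcap_{n} J_n$ with each $J_n$ a nonzero ideal of $\mathcal{B}$. This is precisely the hypothesis of Lemma \ref{lemma2}, which then yields that $\{0\}$ is an accessible prime ideal of $\mathcal{B}$; transporting back through $\pi$ gives accessibility of $P$. Note that this direction does not use separability.

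For the converse, assume $\{0\}$ is accessible, i.e. $\{0\} = \bigcap_{\alpha} J_\alpha$ where $\{J_\alpha\}$ ranges over all nonzero closed ideals of $\mathcal{B}$; the goal is to extract a countable subfamily with the same intersection. The key observation is to pass to complements: since each $J_\alpha$ is closed and $\bigcap_\alpha J_\alpha = \{0\}$, the open sets $\{\mathcal{B} \setminus J_\alpha\}$ cover $\mathcal{B} \setminus \{0\}$. Because $\mathcal{B}$ is a separable metric space it is second countable, hence so is the subspace $\mathcal{B}\setminus\{0\}$, which is therefore Lindel\"of; consequently the cover admits a countable subcover $\{\mathcal{B} \setminus J_{\alpha_n}\}$. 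Taking complements again gives $\bigcap_n J_{\alpha_n} \subseteq \{0\}$, and since the reverse inclusion is trivial, $\{0\} = \bigcap_n J_{\alpha_n}$ is the desired countable intersection of nonzero closed ideals.

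I expect the only real content to be this last topological step: recognizing that accessibility presents $\{0\}$ as an intersection of \emph{closed} sets and recasting it, via complements, as an open cover problem, so that the Lindel\"of property supplied by separability forces a countable subcover. The remaining pieces --- stability of the integral-domain and separability properties under passage to $\mathcal{A}/P$, the ideal-lattice correspondence, and the invocation of Lemma \ref{lemma2} --- are routine.
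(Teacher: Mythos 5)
Your proof is correct and takes essentially the same approach as the paper's: the accessibility-implies-countable-intersection direction is the identical second-countability/Lindel\"of argument via complements of closed ideals (the paper runs it directly in $\mathcal{A}$ rather than in the quotient), and the converse is the same passage to $\mathcal{A}/P$ followed by an appeal to Lemma 2.2. Your up-front ideal-lattice correspondence merely makes explicit the reduction the paper performs implicitly.
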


\begin{proof}
If $P$ is an accessible closed prime ideal, then $P = \bigcap I$, where the intersection runs over all the closed ideals $I$ properly containing $P$. Hence,
\begin{equation}
   \mathcal{A} - P = \bigcup_{I \supset P} (\mathcal{A} - I)
\label{b_minus_i}
\end{equation}
Since $\mathcal{A}$ is a separable metric space, it is second countable. Therefore, every open cover has a countable subcover. By \eqref{b_minus_i}, there exists a sequence $\{ I_n \}$ of closed ideals that properly contain $P$ such that $\mathcal{A} - P = \bigcup_{n=1}^{\infty} (\mathcal{A} - I_n)$, or equivalently $P = \bigcap_{n=1}^{\infty} I_n$. 

Conversely, suppose $P$ is the countable intersection of ideals properly containing it. Let $P = \bigcap J_n$, where $J_n$ are ideals properly containing $P$. Note that we are not assuming $J_n$ are closed. Then the quotient space $\mathcal{A}/P$ is a commutative unital Banach algebra such that $\bigcap_{n=1}^{\infty} (J_n/P)$ is the zero ideal in $\mathcal{A}/P$. Since $P$ is properly contained in each $J_n$, $J_n/P$ is a nonzero ideal in $\mathcal{A}/P$ for each $n$. Therefore, by Lemma \eqref{lemma2}, the zero ideal in $\mathcal{A}/P$ is an accessible closed prime ideal. Hence $P$ is an accessible closed prime ideal in $\mathcal{A}$.  
\end{proof}

\begin{prop}{2.4}
\label{prop4}
Let $\mathcal{B}$ be a commutative Banach algebra that is an integral domain. If every prime ideal in $\mathcal{B}$ is closed, then $\{ 0 \}$ is an accessible prime ideal. 
\end{prop}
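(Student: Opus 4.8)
The plan is to reduce the statement to the dichotomy furnished by Proposition \ref{prop1}. First I would observe that $\{0\}$ is prime, since $\mathcal{B}$ is an integral domain, and that $\{0\}$ is closed, being a singleton in a Hausdorff space. To speak of accessibility at all I must check that $\{0\}$ is \emph{non-maximal}: if $\{0\}$ were maximal, then $\mathcal{B} = \mathcal{B}/\{0\}$ would be a field, hence by the Gelfand--Mazur theorem isometrically isomorphic to $\mathbb{C}$, contradicting the standing assumption that $\mathcal{B}$ is infinite-dimensional. Thus $\{0\}$ is a non-maximal closed prime ideal, and in particular there exists at least one maximal (hence prime, hence nonzero) ideal properly containing it, so the family of nonzero prime ideals is nonempty.

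Applying Proposition \ref{prop1} to the ring $\mathcal{R} = \mathcal{B}$ with $P = \{0\}$ yields two cases. In the first case there is a sequence $\{I_n\}$ of nonzero ideals with $\{0\} = \bigcap_{n=1}^{\infty} I_n$; here Lemma \ref{lemma2} applies directly (using that $\mathcal{B}$ is unital and an integral domain) and concludes at once that $\{0\}$ is an accessible prime ideal.

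In the remaining case, $\{0\}$ equals the intersection of all prime ideals of $\mathcal{B}$ properly containing it, that is, the intersection of all nonzero prime ideals. This is where the hypothesis that every prime ideal of $\mathcal{B}$ is closed does the work: each such nonzero prime ideal is then a closed ideal properly containing $\{0\}$. Writing $\mathcal{C}$ for the family of all nonzero closed ideals and $\mathcal{P}$ for the nonempty family of all nonzero prime ideals, we have $\mathcal{P} \subseteq \mathcal{C}$, so
\begin{equation*}
\bigcap_{I \in \mathcal{C}} I \;\subseteq\; \bigcap_{I \in \mathcal{P}} I \;=\; \{0\}.
\end{equation*}
Since $\{0\}$ is contained in every ideal, the reverse inclusion is immediate, and hence $\{0\} = \bigcap_{I \in \mathcal{C}} I$, which is exactly the assertion that $\{0\}$ is accessible.

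I expect the only genuine subtlety to be the case analysis itself: the whole content of the proposition lies in recognizing that the ``bad'' branch of Proposition \ref{prop1} (the intersection over nonzero prime ideals) is neutralized precisely by the closedness hypothesis, while the other branch is absorbed by Lemma \ref{lemma2}. The verification that $\{0\}$ is non-maximal via Gelfand--Mazur, and the bookkeeping that an intersection taken over a larger family of ideals can only shrink, are routine, and no analytic estimates are required.
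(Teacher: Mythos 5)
Your proof is correct and takes essentially the same route as the paper's: invoke the dichotomy of Proposition 2.1 for $P=\{0\}$, dispose of the sequence-of-nonzero-ideals branch via Lemma 2.2, and observe that the closedness hypothesis makes the intersection-of-nonzero-primes branch immediately yield accessibility, since every nonzero prime is then a closed ideal properly containing $\{0\}$. Your extra verifications (non-maximality of $\{0\}$ via Gelfand--Mazur and the inclusion bookkeeping) are details the paper leaves implicit, not a different argument.
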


\begin{proof}
If $\{ 0 \}$ is the intersection of all nonzero prime ideals of $\mathcal{B}$, then $\{ 0 \}$ is obviously an accessible prime ideal by Proposition \eqref{prop1} since every prime ideal is closed.  Otherwise, by Proposition \eqref{prop1}, there exists a sequence $\{ I_n \}$ of nonzero ideals such that $\bigcap I_n = \{ 0 \}$. Then, by Lemma \eqref{lemma2}, $\{ 0 \}$ is an accessible prime ideal. This completes the proof of the proposition. 
\end{proof}

\begin{corr}{2.5}
\label{corr5}
Let $\mathcal{B}$ be a commutative unital Banach algebra in which every prime ideal is closed. Then every non-maximal closed prime ideal is accessible. 
\end{corr}

\begin{proof}
Let $P$ be a non-maximal closed prime ideal of $\mathcal{B}$. Then $\mathcal{B}/P$ is an integral domain that satisfies the hypothesis of Proposition \eqref{prop4}. Hence the zero ideal in $\mathcal{B}/P$ is accessible. Therefore, $P$ is an accessible prime ideal. 
\end{proof}

\section{Main Results}
We are now ready to prove that every non-maximal closed prime ideal in a commutative unital Banach algebra is accessible. In other words, inaccessible closed prime ideals do not exist in any commutative unital Banach algebra.

\begin{theorem}{3.1}
\label{thrm1}
Suppose $\mathcal{A}$ is a commutative unital Banach algebra that is an integral domain. The zero ideal in $\mathcal{A}$ is an accessible prime ideal. 
\end{theorem}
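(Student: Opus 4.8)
The plan is to reduce to the dichotomy of Proposition~\ref{prop1} and then supply a topological argument in the one case that is not purely ring-theoretic. First I would record that $\{0\}$ is a non-maximal prime ideal of $\mathcal{A}$: it is prime since $\mathcal{A}$ is an integral domain, and it is non-maximal because otherwise $\mathcal{A}$ would be a field and hence, by the Gelfand--Mazur theorem, isomorphic to $\mathbb{C}$, contradicting the standing hypothesis that $\mathcal{A}$ is infinite-dimensional. Proposition~\ref{prop1} then applies to $P=\{0\}$ and presents two possibilities. If there exists a sequence $\{I_n\}$ of nonzero ideals with $\bigcap_{n}I_n=\{0\}$, then Lemma~\ref{lemma2} at once yields that $\{0\}$ is accessible, and the proof is complete.

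The work therefore lies entirely in the remaining alternative, namely that $\{0\}=\bigcap\{Q:Q\text{ is a nonzero prime ideal}\}$, where crucially the $Q$ need not be closed. I would treat this by contradiction: suppose $\{0\}$ is not accessible and put $N:=\bigcap\{I:I\text{ is a nonzero closed ideal}\}$, so $N\neq\{0\}$. As an intersection of closed ideals $N$ is itself a closed ideal, and it is contained in every nonzero closed ideal, so it is the unique minimal nonzero closed ideal of $\mathcal{A}$. From minimality I would extract strong rigidity. For each nonzero $a\in\mathcal{A}$ the principal ideal closure $\overline{a\mathcal{A}}$ is a nonzero closed ideal, whence $N\subseteq\overline{a\mathcal{A}}$; and for nonzero $a\in N$ the set $\overline{a^{k}N}$ is a nonzero closed ideal contained in $N$ (it contains $a^{2k}\neq 0$ since $\mathcal{A}$ is a domain, and $a^{k}N\subseteq N$ since $N$ is an ideal), so minimality forces $\overline{a^{k}N}=N$ for every $k\geq 1$. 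Consequently every nonzero $n_0\in N$ satisfies $n_0\in\overline{a^{k}\mathcal{A}}$ for all $k$, that is, $n_0\in\bigcap_{k\geq 1}\overline{a^{k}\mathcal{A}}$ for every nonzero $a$.

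To reach a contradiction I would combine this rigidity with the hypothesis $\bigcap\{Q\}=\{0\}$. Fixing a nonzero $n_0\in N$, Zorn's Lemma (applied to ideals maximal with respect to missing a multiplicative set, exactly as in the proof of Proposition~\ref{prop1}) produces a nonzero prime ideal $Q$ with $n_0\notin Q$; since $\overline{Q}$ is a proper closed ideal (the closure of a proper ideal is proper, because the group of units is open) and is nonzero, it contains $N$, so $n_0\in\overline{Q}\setminus Q$. The point is then to show that this ``infinite approximate divisibility'' of $n_0$ cannot persist across the whole decreasing chain of closed ideals $\overline{a^{k}\mathcal{A}}$. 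I expect this to be the main obstacle: the facts $\bigcap_{k}\overline{a^{k}\mathcal{A}}=\{0\}$ and, more globally, $\bigcap\{\overline{Q}:Q\text{ nonzero prime}\}=\{0\}$ do not follow formally from their non-closed counterparts, since closure does not commute with infinite (and possibly uncountable) intersections. Bridging this gap is precisely the topological input, and I would supply it by a Mittag--Leffler type stability argument on the tower $\bigl(\overline{a^{k}\mathcal{A}}\bigr)_k$, of the same flavour as the Esterle results underlying Lemma~\ref{lemma2}, showing that the stable ideal $\bigcap_{k}\overline{a^{k}\mathcal{A}}$ must be $\{0\}$ in an integral domain. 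This contradicts $n_0\neq 0$ and establishes that $\{0\}$ is accessible.
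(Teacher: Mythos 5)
Your proposal is correct up to the point you yourself flag as ``the main obstacle,'' but that obstacle is the entire theorem, and the argument you sketch for it does not work. In your Case B you reduce everything to the claim that $\bigcap_{k}\overline{a^{k}\mathcal{A}}=\{0\}$ for a nonzero non-invertible $a$, to be supplied ``by a Mittag--Leffler type stability argument of the same flavour as the Esterle results underlying Lemma~\ref{lemma2}.'' Two problems. First, under your standing hypotheses you have already shown $N\subseteq\bigcap_{k}\overline{a^{k}\mathcal{A}}$ for every nonzero $a$, so the claim ``the stable ideal is $\{0\}$'' is literally a restatement of the contradiction you want; deferring it to an unspecified stability argument is circular, not a proof. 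Second, and more seriously, the general lemma you are gesturing at is false. In the disc algebra $A(\mathbb{D})$ (a commutative unital Banach algebra and an integral domain) take $a=1-z$. With $g_n=\tfrac{n}{2+n(1-z)}\in A(\mathbb{D})$ one checks $\|(1-z)-(1-z)^{2}g_n\|_\infty\le 4/n$, using $\operatorname{Re}(1-z)\ge 0$ on the closed disc; hence $(1-z)\in\overline{(1-z)^{2}A(\mathbb{D})}$, and by induction $\overline{(1-z)^{k}A(\mathbb{D})}=\overline{(1-z)A(\mathbb{D})}$ for all $k$, so $\bigcap_{k}\overline{a^{k}\mathcal{A}}\supseteq\overline{(1-z)A(\mathbb{D})}\neq\{0\}$. (The disc algebra is semisimple, so it is no counterexample to the theorem itself, only to your proposed bridging lemma.) The moral is that Mittag--Leffler/Esterle arguments, as packaged in Lemma~\ref{lemma2}, require as \emph{input} a countable family of not-necessarily-closed ideals with zero \emph{algebraic} intersection; they convert algebraic smallness into topological smallness, but they cannot manufacture the vanishing of an intersection of closed ideals out of nothing. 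In your Case B you never produce that algebraic input.

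Producing it is exactly what the paper's proof does, and is where all the work lies. The paper splits on the set $\mathcal{N}$ of non-closed prime ideals of $\mathcal{A}$: if $\mathcal{N}$ is empty or countable, the ideals $I\cap P_n$ (with $I$ the smallest nonzero closed ideal) give a countable family of nonzero ideals whose intersection reduces to the situation of Proposition~\ref{prop1} and Lemma~\ref{lemma2}. In the uncountable case, Zorn's Lemma is used twice: once to extract a \emph{minimal} non-closed prime $P_{\alpha}$ not containing $I$, and once, after building the multiplicative set $S$ from carefully chosen elements $x_{\gamma}\in P_{\gamma}\cap I$ (which meets every nonzero prime ideal, closed or not), to conclude that $\{0\}$ is maximal among ideals disjoint from $S$. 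This yields the purely algebraic claim $\bigcap_{n}(x_{\alpha}^{n})=\{0\}$ --- a countable family of nonzero principal ideals with zero intersection --- and only \emph{then} is Lemma~\ref{lemma2} invoked. Your proof is missing an analogue of this construction; without it, the final step is a genuine gap, not a routine topological verification.
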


\begin{proof}
Let $I$ be the intersection of all non-zero closed ideals of $\mathcal{A}$. Note that $I$ is the smallest non-zero closed ideal of $\mathcal{A}$. Let $\mathcal{N}$ be the set of all non-closed prime ideals of $\mathcal{A}$. The proof is divided into three cases:\\

\textbf{Case I:} Suppose $\mathcal{N}$ is empty. From Proposition \eqref{prop4}, it follows that the zero ideal is an accessible prime ideal. \\

\textbf{Case II:} Suppose $\mathcal{N}$ is countable, and let $\mathcal{N} = \{P_n\}$, where each $P_n$ is a non-closed prime ideal of $\mathcal{A}$. Define $I_n = I \cap P_n$. Each $I_n$ is non-zero as $\mathcal{A}$ is an integral domain. Note that $\bigcap I_n$ is contained within the intersection of all prime ideals of $\mathcal{A}$. By Proposition \eqref{prop4}, either $\bigcap I_n = \{0\}$ or there exists another sequence of non-zero ideals of $\mathcal{A}$ whose intersection is the zero ideal. In either case, this implies the zero ideal is an accessible prime ideal. \\

\textbf{Case III:} Suppose $\mathcal{N}$ is uncountable. Without loss of generality, we may assume that $\mathcal{N}$ consists exclusively of non-closed prime ideals that do not contain $I$. If $\mathcal{M}$ is the subset of $\mathcal{N}$ consisting of members that do contain $I$, and if $\mathcal{N} \setminus \mathcal{M}$ is countable, then by applying \textbf{Case II} to $\mathcal{N} \setminus \mathcal{M}$, we conclude that the zero ideal is an accessible prime ideal. Let $\mathcal{N} = \{P_{\gamma} : \gamma \in J\}$, where $J$ is an index set.\\

Using natural inclusion as a partial ordering, we can show that the intersection of any chain of prime ideals in $\mathcal{N}$ is itself a member of $\mathcal{N}$, since no element of $\mathcal{N}$ contains a closed ideal. By Zorn's Lemma, $\mathcal{N}$ has minimal elements. Let $P_{\alpha}$ be a fixed minimal element in $\mathcal{N}$.  For each $\beta \neq \alpha$ in $J$, $P_{\alpha} \cap I \neq P_{\beta} \cap I$. If $P_{\alpha} \cap I = P_{\beta} \cap I$, then $P_{\beta} \cap I \subset P_{\alpha}$. Since $P_{\alpha}$ is a prime ideal not containing $I$ and minimal element of $\mathcal{N}$, we conclude $P_{\alpha} = P_{\beta}$. \\

Now choose a non-zero element $x_{\alpha} \in P_{\alpha} \cap I$ and $x_{\beta} \in P_{\beta} \cap I \setminus P_{\alpha} \cap I$. Consider the set $S = \{1\} \cup \{\text{finite products of } x_{\gamma} : \gamma \in J\}$, where $1$ is the multiplicative identity of $\mathcal{A}$.  since $\mathcal{A}$ is an integral domain, $S$ is a multiplicative set not containing zero . Note that  $S$ contains $\{x_{\alpha}^n : n \geq 1\}$ \\

Notice that all non-zero prime ideals of $\mathcal{A}$ (including both closed and non-closed) contain at least one element of $S - \{1\}$. By Zorn's Lemma, the zero ideal is maximal with respect to all ideals not intersecting $S$. \\

Now consider the ideal $\bigcap (x_{\alpha}^n)$, where $(x_{\alpha}^n)$ is the principal ideal generated by $x_{\alpha}^n$. \\

\textit{Claim:} $\bigcap (x_{\alpha}^n) = \{0\}$. \\

\textit{Proof of the claim.} Suppose $\bigcap (x_{\alpha}^n) \neq \{0\}$. Since the zero ideal is maximal among the ideals not intersecting $S$, $S \cap \bigcap (x_{\alpha}^n)$ is non-empty. Let $t \in S$ such that $t \in (x_{\alpha}^n)$ for each positive integer $n$. If $t$ is of the form $x_{\alpha}^m$ for some positive integer $m$, then $x_{\alpha}^m \in (x_{\alpha}^{m+1})$. This means $x_{\alpha}^m = c_{\alpha} x_{\alpha}^{m+1}$ for some $c_{\alpha} \in \mathcal{A}$. This implies $x_{\alpha}$ is invertible, contradicting that $x_{\alpha} \in P_{\alpha}$. \\

If $t$ is of the form $x_{\alpha}^m (x_{\beta_1} \cdots x_{\beta_k})$, where $m$ is a positive integer and $\beta_i \neq \alpha$, then $x_{\alpha}^m (x_{\beta_1} \cdots x_{\beta_k}) \in (x_{\alpha}^{m+1})$. By cancellation in the algebra, $x_{\beta_1} \cdots x_{\beta_k}$ is a multiplier of $x_\alpha$ and hence is in   $P_{\alpha}$. Since $P_{\alpha}$ is prime, some $x_{\beta_i} \in P_{\alpha}$, which is false. This completes the proof of the claim. \\

Since $\bigcap (x_{\alpha}^n) = \{0\}$, by Lemma \eqref{lemma2}, the zero ideal is an accessible prime ideal. This completes the proof of the theorem.
\end{proof}

\begin{theorem}{3.2}
\label{thrm2}
Every non-maximal closed prime ideal in a commutative unital Banach algebra is accessible.
\end{theorem}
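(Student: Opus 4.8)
The plan is to reduce the general statement to Theorem \eqref{thrm1} by passing to the quotient algebra, exactly as Corollary \eqref{corr5} reduces to Proposition \eqref{prop4}. Let $P$ be a non-maximal closed prime ideal of a commutative unital Banach algebra $\mathcal{B}$. First I would observe that, because $P$ is closed, the quotient $\mathcal{B}/P$ is again a commutative unital Banach algebra under the quotient norm, and because $P$ is prime, $\mathcal{B}/P$ has no nonzero zero divisors, i.e.\ it is an integral domain. Moreover, since $P$ is non-maximal, the zero ideal of $\mathcal{B}/P$ is a non-maximal prime ideal; in particular $\mathcal{B}/P$ is not a field, so by Gelfand--Mazur it cannot be finite dimensional, and the standing hypotheses of the paper apply to it.

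With this setup, Theorem \eqref{thrm1} applies directly to the algebra $\mathcal{B}/P$ and yields that its zero ideal is an accessible prime ideal. By the definition of accessibility this means
\[
\{0\} = \bigcap_{K} K,
\]
where $K$ ranges over all nonzero closed ideals of $\mathcal{B}/P$.

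It then remains to transport this identity back to $\mathcal{B}$. Let $\pi : \mathcal{B} \to \mathcal{B}/P$ denote the quotient homomorphism. The assignment $K \mapsto \pi^{-1}(K)$ is an inclusion-preserving bijection between the closed ideals of $\mathcal{B}/P$ and the closed ideals of $\mathcal{B}$ that contain $P$, and under it the nonzero closed ideals of $\mathcal{B}/P$ correspond exactly to the closed ideals of $\mathcal{B}$ that \emph{properly} contain $P$. Since preimage under $\pi$ commutes with arbitrary intersections, applying $\pi^{-1}$ to the displayed identity gives
\[
P = \pi^{-1}(\{0\}) = \pi^{-1}\Bigl(\bigcap_{K} K\Bigr) = \bigcap_{K} \pi^{-1}(K) = \bigcap_{I \supsetneq P} I,
\]
where the final intersection runs over all closed ideals $I$ of $\mathcal{B}$ that properly contain $P$. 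This is precisely the assertion that $P$ is accessible.

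I expect the only delicate point to be the bookkeeping in the last paragraph: one must confirm that the correspondence between closed ideals of the quotient and closed ideals containing $P$ is a genuine bijection that matches "nonzero in $\mathcal{B}/P$" with "properly contains $P$ in $\mathcal{B}$," and that taking $\pi$-preimages commutes with the (possibly uncountable) intersection. Both facts are standard consequences of $\pi$ being a continuous, open, surjective algebra homomorphism with kernel $P$, but they are exactly what turns the reduction to Theorem \eqref{thrm1} into a rigorous argument rather than a merely plausible one.
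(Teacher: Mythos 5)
Your proposal is correct and follows exactly the paper's own route: quotient by the closed prime $P$ to get a commutative unital Banach algebra that is an integral domain, invoke Theorem \eqref{thrm1} to get accessibility of its zero ideal, and pull back along the quotient map. The extra care you take (the inclusion-preserving bijection between closed ideals of $\mathcal{B}/P$ and closed ideals of $\mathcal{B}$ containing $P$, and the Gelfand--Mazur check that $\mathcal{B}/P$ is infinite dimensional so the paper's standing hypotheses apply) is simply making explicit what the paper's one-line ``Hence'' leaves implicit.
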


\begin{proof}
Let $\mathcal{P}$ be a non-maximal closed prime ideal of a commutative unital Banach algebra $\mathcal{A}$. Then the quotient algebra $\mathcal{A}/\mathcal{P}$ is a unital Banach algebra that is an integral domain. By Theorem \eqref{thrm1}, the zero ideal in $\mathcal{A}/\mathcal{P}$ is accessible. Hence, $\mathcal{P}$ is an accessible prime ideal in $\mathcal{A}$.
\end{proof}

Before we proceed further, we will state a result that was proved by P.C. Curtis in \cite{Curtis1981Derivations} for a separating ideal of a derivation. The proof can easily be modified for any closed ideal that satisfies the stability condition (1.1) in the introduction.  

\begin{prop}{3.3}
\label{prop3b}
Let $\mathcal{I}$ be a closed ideal of a commutative unital Banach algebra $\mathcal{A}$ that satisfies the stability condition (1.1) given in the introduction. Then $\mathcal{I}$ is contained in every accessible prime ideal. 
\end{prop}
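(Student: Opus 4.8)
The plan is to argue by contradiction after passing to a quotient. Suppose $P$ is an accessible prime ideal with $\mathcal{I} \not\subseteq P$, and set $\mathcal{D} = \mathcal{A}/P$ with quotient map $q$. Then $\mathcal{D}$ is a unital commutative Banach algebra that is an integral domain, and under the order isomorphism between closed ideals of $\mathcal{D}$ and closed ideals of $\mathcal{A}$ containing $P$, the accessibility of $P$ translates exactly into the statement that the intersection of all \emph{nonzero} closed ideals of $\mathcal{D}$ is $\{0\}$. The image $\bar{\mathcal{I}} := \overline{q(\mathcal{I})}$ is a nonzero closed ideal of $\mathcal{D}$, nonzero precisely because $\mathcal{I} \not\subseteq P$. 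First I would check that $\bar{\mathcal{I}}$ inherits the stability condition \eqref{*} in $\mathcal{D}$; this is a routine computation using that $q$ is a continuous surjection and that for a closed subspace $P$ one has $\overline{q(S)} = q(\overline{S + P})$, so that a stabilizing chain $\overline{a_1\cdots a_n \mathcal{I}}$ upstairs pushes down to a stabilizing chain $\overline{q(a_1)\cdots q(a_n)\,\bar{\mathcal{I}}}$. The whole problem thus reduces to showing that an integral domain in which $\{0\}$ is accessible admits no nonzero closed ideal satisfying \eqref{*}.

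The heart of the argument is the following structural observation about $\mathcal{D}$. Call a nonzero closed ideal $K$ \emph{append-minimal} if $\overline{aK} = K$ for every nonzero $a \in \mathcal{D}$, noting that $\overline{aK}$ is always a nonzero closed subideal of $K$ because $\mathcal{D}$ is a domain. I claim an append-minimal ideal would have to be the \emph{smallest} nonzero closed ideal of $\mathcal{D}$: for any nonzero closed ideal $L$, choosing $0 \neq a \in L$ gives $aK \subseteq L$, whence $K = \overline{aK} \subseteq L$. But since $\{0\}$ is accessible, no smallest nonzero closed ideal can exist, as its presence would force the intersection of all nonzero closed ideals to be nonzero. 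Consequently no nonzero closed ideal of $\mathcal{D}$ is append-minimal: for every nonzero closed ideal $K$ there is a nonzero $a$ with $\overline{aK} \subsetneq K$.

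With this in hand I would build a single sequence that defeats stability. Starting from $G_0 = \bar{\mathcal{I}}$, and having produced a nonzero $G_k = \overline{a_1 \cdots a_{m}\,\bar{\mathcal{I}}}$, the previous paragraph supplies a nonzero $a_{m+1}$ with $\overline{a_{m+1} G_k} \subsetneq G_k$; since multiplication by a fixed element commutes with closure, $\overline{a_{m+1}\,\overline{S}} = \overline{a_{m+1}S}$, this is exactly $\overline{a_1\cdots a_{m+1}\,\bar{\mathcal{I}}} \subsetneq \overline{a_1\cdots a_{m}\,\bar{\mathcal{I}}}$. Appending one factor at a time therefore yields a sequence $\{a_n\}$ for which $\overline{a_1\cdots a_n \,\bar{\mathcal{I}}}$ is \emph{strictly} decreasing at every step, which directly contradicts \eqref{*}. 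I expect the main obstacle to be locating this structural dichotomy, namely recognizing that failure of stability is governed by append-minimality and that append-minimality collapses to the existence of a smallest nonzero closed ideal, which accessibility forbids. The two remaining points, the descent of \eqref{*} to $\mathcal{D}$ and the identity $\overline{a\,\overline{S}} = \overline{aS}$, are routine, and the domain hypothesis is used throughout to keep the shrinking ideals nonzero.
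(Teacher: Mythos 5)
Your proposal is correct, and it is worth noting that the paper itself offers no argument for Proposition 3.3 at all: its ``proof'' is a one-line citation to Lemma 1.1 of Curtis's paper, together with the earlier remark that Curtis's proof (stated there for separating ideals of derivations) ``can easily be modified'' to any closed ideal satisfying the stability condition \eqref{*}. Your write-up actually supplies that missing argument, in exactly the generality the paper needs. The structure is the natural one: pass to $\mathcal{D} = \mathcal{A}/P$, where accessibility of $P$ becomes the statement that the intersection of all nonzero closed ideals of $\mathcal{D}$ is $\{0\}$; check that $\bar{\mathcal{I}} = \overline{q(\mathcal{I})}$ is a nonzero closed ideal inheriting \eqref{*}; and then run the dichotomy. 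Your ``append-minimality'' observation is the crux and it is sound: if $\overline{aK} = K$ for all nonzero $a$, then for any nonzero closed ideal $L$ and any nonzero $a \in L$ one gets $K = \overline{aK} \subseteq L$, so $K$ would lie in the intersection of all nonzero closed ideals, which is $\{0\}$ --- contradiction; hence every nonzero closed ideal $K$ admits a nonzero $a$ with $\overline{aK} \subsetneq K$, and iterating this (using $\overline{a\,\overline{S}} = \overline{aS}$ and commutativity to rewrite $\overline{a_{k+1}G_k}$ as $\overline{a_1 \cdots a_{k+1}\bar{\mathcal{I}}}$, with the domain hypothesis keeping each stage nonzero) produces a sequence whose chain of closed ideals strictly decreases at every step, flatly contradicting \eqref{*}. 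The two steps you flag as routine --- the descent of \eqref{*} through $q$ (via lifting the sequence and $\overline{q(\overline{S})} = \overline{q(S)}$, or your identity $\overline{q(S)} = q(\overline{S+P})$) and the closure-multiplication identity --- do check out. What your route buys over the paper's: a self-contained, verifiable proof of the proposition as stated, rather than a deferral to a reference proved in a narrower setting.
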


\begin{proof}
Refer to the proof of Lemma 1.1 in \cite{Curtis1981Derivations}.
\end{proof}

Now we state and prove a series of results which provide answers to the questions (1) through (7) as stated in the introduction. \\

\begin{theorem}{3.4}
\label{thrm4}
Suppose $\mathcal{A}$ is a commutative unital Banach algebra which is an integral domain. Then every derivation on $\mathcal{A}$ is continuous.  
\end{theorem}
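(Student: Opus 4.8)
The plan is to reduce the continuity of an arbitrary derivation $D$ on $\mathcal{A}$ to the accessibility of the zero ideal, which Theorem \eqref{thrm1} already supplies. The engine driving the argument is the interplay among three facts recalled in the introduction: first, the separating ideal $\mathcal{S}(D)$ is a closed ideal of $\mathcal{A}$; second, it satisfies the stability condition \eqref{*} (by Sinclair's result in \cite{sinclair1976automatic}); and third, by the closed graph theorem $D$ is continuous if and only if $\mathcal{S}(D) = \{0\}$. Granting these, the task reduces to forcing $\mathcal{S}(D)$ to collapse onto the zero ideal.

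First I would verify that the zero ideal is a \emph{non-maximal} closed prime ideal of $\mathcal{A}$, so that Theorem \eqref{thrm1} genuinely applies. Since $\mathcal{A}$ is an integral domain, $\{0\}$ is prime and (being the kernel of the identity map, or simply as $\{0\}$) closed. If $\{0\}$ were maximal, then $\mathcal{A} = \mathcal{A}/\{0\}$ would be a commutative unital Banach algebra that is a field, hence isomorphic to $\mathbb{C}$ by the Gelfand--Mazur theorem; this contradicts the standing assumption that every Banach algebra under consideration is infinite-dimensional. Therefore $\{0\}$ is non-maximal, and Theorem \eqref{thrm1} guarantees that it is an accessible prime ideal.

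With accessibility in hand, I would invoke Proposition \eqref{prop3b}, which asserts that a closed ideal satisfying the stability condition \eqref{*} is contained in every accessible prime ideal. Applying this with $\mathcal{I} = \mathcal{S}(D)$ and with the accessible prime ideal $\{0\}$ yields $\mathcal{S}(D) \subseteq \{0\}$, that is, $\mathcal{S}(D) = \{0\}$. The closed graph characterization of continuity then immediately gives that $D$ is continuous, completing the proof.

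I expect the only genuine subtlety to be the non-maximality step: the accessibility machinery is stated for non-maximal closed prime ideals, so one must explicitly exclude the degenerate possibility that $\{0\}$ is maximal. The Gelfand--Mazur argument, combined with the blanket infinite-dimensionality hypothesis, disposes of this cleanly. Everything else is bookkeeping, provided one is content to cite the standard facts that $\mathcal{S}(D)$ is a closed ideal and satisfies \eqref{*}, together with the closed graph theorem.
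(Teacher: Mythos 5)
Your proof is correct and follows essentially the same route as the paper: accessibility of the zero ideal (Theorem \eqref{thrm1}), containment of the stability-satisfying closed ideal $\mathcal{S}(D)$ in every accessible prime ideal (Proposition \eqref{prop3b}), and the closed graph theorem. Your explicit Gelfand--Mazur check that $\{0\}$ is non-maximal is a detail the paper leaves implicit (it relies on the blanket infinite-dimensionality assumption), and you also cite the correct proposition, whereas the paper's proof points to Proposition \eqref{prop3} where it plainly means Proposition \eqref{prop3b}.
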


\begin{proof}
Let $D$ be a derivation on the commutative unital Banach algebra $\mathcal{A}$. Let $\mathcal{S}(D)$ be the separating ideal of $D$. As noted earlier, $\mathcal{S}(D)$ is a closed ideal satisfying the stability condition (1.1). Hence, by Proposition \eqref{prop3}, $\mathcal{S}(D)$ is contained in every accessible prime ideal of $\mathcal{A}$. Since the zero ideal is a prime ideal, $\mathcal{S}(D) = \{0\}$. By the closed graph theorem, $D$ is continuous.
\end{proof}

\begin{theorem}{3.5}
\label{thrm4}
Suppose $\mathcal{A}$ is a commutative unital semiprime Banach algebra. Then every derivation on $\mathcal{A}$ is continuous.  
\end{theorem}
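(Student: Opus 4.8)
The plan is to prove that the separating ideal $\mathcal{S}(D)$ of a derivation $D$ on $\mathcal{A}$ is the zero ideal; the closed graph theorem then forces $D$ to be continuous, exactly as in Theorem 3.4. Since $\mathcal{A}$ is semiprime, the intersection of all its prime ideals is $\{0\}$, so it suffices to show that $\mathcal{S}(D) \subseteq P$ for every prime ideal $P$ of $\mathcal{A}$. The only structural facts I would use about $\mathcal{S}(D)$ are that it is a closed ideal and that it satisfies the stability condition (1.1).

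First I would dispose of the closed prime ideals. If $P$ is a non-maximal closed prime ideal, then $P$ is accessible by Theorem \eqref{thrm2}, and hence $\mathcal{S}(D) \subseteq P$ by Proposition \eqref{prop3b}. For a maximal ideal $M$ I would instead pass through the Jacobson radical: each element of $\mathcal{S}(D)$ is a limit of elements of $D(\mathcal{A})$, so $\mathcal{S}(D) \subseteq \overline{D(\mathcal{A})}$, while the Singer--Wermer theorem in the form established by Thomas \cite{thomas1988image} gives $D(\mathcal{A}) \subseteq \mathrm{rad}(\mathcal{A})$. As the radical is closed, $\mathcal{S}(D) \subseteq \mathrm{rad}(\mathcal{A}) = \bigcap \{ M : M \text{ maximal} \} \subseteq M$. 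Combining the two cases, $\mathcal{S}(D)$ is contained in every closed prime ideal of $\mathcal{A}$, and therefore in the ideal $N := \bigcap \{ P : P \text{ a closed prime ideal} \}$.

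The remaining, and I expect only substantial, difficulty is that the semiprime hypothesis concerns all prime ideals, whereas Theorem \eqref{thrm2} and Proposition \eqref{prop3b} control only the closed ones. I would close this gap by reducing the theorem to a single assertion: the ideal $N$ is nil, in the sense that each of its elements is nilpotent. Granting this, semiprimeness (equivalently, the absence of nonzero nilpotent elements) forces $N = \{0\}$, and since $\mathcal{S}(D) \subseteq N$ the proof is complete.

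The hard part will be the nilpotency of $N$. Here I would imitate the multiplicative-set argument of Theorem \eqref{thrm1}: if some $x \in N$ were not nilpotent, then $\{ 1, x, x^2, \dots \}$ is a multiplicative set avoiding $0$, and Zorn's Lemma produces a prime ideal maximal with respect to missing it and hence not containing $x$. Since $x$ lies in every closed prime ideal, this prime is necessarily non-closed, so the whole problem concentrates on whether one can always extract a \emph{closed} prime ideal avoiding a prescribed non-nilpotent element of the radical. This is the point at which the Banach-algebra structure must enter, beyond the purely ring-theoretic machinery of Section 2, and I expect it to be the genuine obstacle in the argument. (If one already had the independent fact that every separating ideal is nil, the theorem would of course be immediate from $\mathcal{S}(D) \subseteq$ the nilradical.)
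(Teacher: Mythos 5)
Your first two steps are correct and use the paper's machinery exactly as intended: $\mathcal{S}(D)$ is a closed ideal satisfying the stability condition (1.1), so Theorem 3.2 together with Proposition 3.3 places it inside every non-maximal closed prime ideal, and Thomas's theorem \cite{thomas1988image} (via $\mathcal{S}(D)\subseteq\overline{D(\mathcal{A})}\subseteq \operatorname{rad}(\mathcal{A})$, the radical being closed) places it inside every maximal ideal. Hence $\mathcal{S}(D)\subseteq N:=\bigcap\{P : P \text{ a closed prime ideal of } \mathcal{A}\}$. But the proof is not complete, and the gap you flag yourself is genuine: semiprimeness says that the intersection of \emph{all} prime ideals is zero, and nothing in your argument shows that $N$ is contained in the set of nilpotent elements, i.e., that every non-nilpotent element of $\mathcal{A}$ avoids some \emph{closed} prime. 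The Zorn's Lemma argument of Theorem 3.1 cannot be adapted to produce such a prime: maximality with respect to missing $\{1,x,x^2,\dots\}$ gives no control on closedness, you cannot pass to the closure (the closure of a prime ideal need not be prime), and you cannot quotient by a non-closed prime and stay in the category of Banach algebras. So the single assertion to which you reduce the theorem is not a routine technicality; it is essentially the hard content of the known results in this area.

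The paper does not attempt your direct argument. Its proof of Theorem 3.5 is a citation: Proposition 2.4 of \cite{garimella1993nilpotency} or Theorem 1 of \cite{Runde1991AutomaticCO}, which are precisely the known equivalences among questions (1)--(3) of the introduction. Those results show that continuity of all derivations on commutative unital Banach algebras that are integral domains implies continuity of all derivations on commutative unital semiprime Banach algebras (equivalently, that every separating ideal lies in the set of nilpotent elements); combined with Theorem 3.4, which the paper has just proved via accessibility, this yields Theorem 3.5. That reduction is exactly what closes your gap, and it rests on techniques different from the accessibility machinery of Sections 2--3 (Cusack/Runde-type arguments relating non-closed primes and radical algebras), which is why it is cited rather than reproved. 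Note also that your closing parenthetical remark --- that the theorem would be immediate if one knew every separating ideal consists of nilpotent elements --- is in fact the paper's actual route: that statement is Theorem 3.6, obtained from the same citations. In short: your partial progress is correct, and you correctly identified the obstruction, but the obstruction is the real content of the theorem and remains unproved in your proposal.
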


\begin{proof}
The proof follows from Proposition (2.4) of \cite{garimella1993nilpotency} or Theorem 1 of \cite{Runde1991AutomaticCO}.
\end{proof}

\begin{theorem}{3.6}
\label{thrm5}
Suppose $\mathcal{A}$ is a commutative unital Banach algebra. Then the separating ideal $\mathcal{S}$ of any derivation $D$ on $\mathcal{A}$ or an epimorphism from a commutative unital Banach algebra onto another commutative unital Banach algebra is a nil ideal. That is, there exists a positive integer $N$ such that $\mathcal{S}^N = \{0\}$. 
\end{theorem}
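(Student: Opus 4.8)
The plan is to run the machinery already assembled in the paper and reduce everything to the behaviour of the powers of $\mathcal{S}$. Recall from the introduction that whether $\mathcal{S}$ is the separating ideal of a derivation on $\mathcal{A}$ or of an epimorphism onto $\mathcal{A}$, it is a closed ideal satisfying the stability condition (1.1). Hence Proposition 3.3 applies, and since Theorem 3.2 makes every non-maximal closed prime ideal accessible, $\mathcal{S}$ is contained in every non-maximal closed prime ideal of $\mathcal{A}$. The target $\mathcal{S}^N=\{0\}$ will follow once I show that the chain of closed ideals $\overline{\mathcal{S}}\supseteq\overline{\mathcal{S}^2}\supseteq\cdots$ reaches the zero ideal, because $\mathcal{S}^N\subseteq\overline{\mathcal{S}^N}$, so $\overline{\mathcal{S}^N}=\{0\}$ gives the purely algebraic conclusion for free. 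I would therefore set $\mathcal{S}^\infty=\bigcap_{n}\overline{\mathcal{S}^n}$ and aim to prove $\mathcal{S}^\infty=\{0\}$ together with stabilization $\overline{\mathcal{S}^N}=\mathcal{S}^\infty$ for some $N$.

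First I would dispose of the maximal ideals. By the classical finiteness theorem for separating ideals (Sinclair; see Dales' monograph), all but finitely many maximal ideals $M_1,\dots,M_k$ already contain $\mathcal{S}$. For each exceptional $M_i$ we have $\mathcal{S}+M_i=\mathcal{A}$, so there is $s\in\mathcal{S}$ with $s\equiv 1\pmod{M_i}$; if $s$ turns out to be nilpotent in $\mathcal{A}$ this forces $1\in M_i$, a contradiction, so establishing element-wise nilpotence of $\mathcal{S}$ will automatically place $\mathcal{S}$ inside every maximal ideal. Combined with the first paragraph, the whole burden thus reduces to showing that every element of $\mathcal{S}$ is nilpotent and that a single exponent works.

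The main obstacle is precisely this last reduction, for two reasons. The nilradical of $\mathcal{A}$ is the intersection of \emph{all} prime ideals, and a minimal prime need not be closed, so containment of $\mathcal{S}$ in every \emph{closed} prime does not by itself force element-wise nilpotence. To bridge this I would lean on the stability condition (1.1) in the manner of Curtis and Sinclair: it forces the powers $\overline{\mathcal{S}^n}$ to stabilize at a closed ideal $\mathcal{S}^\infty$ satisfying Sinclair's identity $\mathcal{S}^\infty=\overline{\mathcal{S}\,\mathcal{S}^\infty}$, and the strategy of the paper is that a nonzero such $\mathcal{S}^\infty$ must sit inside a prime ideal that is minimal over a suitable annihilator and is \emph{inaccessible}, which is impossible by Theorem 3.2. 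Making this dichotomy precise — extracting an inaccessible prime from a nonzero $\mathcal{S}^\infty$ with $\mathcal{S}^\infty=\overline{\mathcal{S}\,\mathcal{S}^\infty}$, or equivalently showing directly that such an $\mathcal{S}^\infty$ cannot coexist with accessibility of all non-maximal closed primes — is where I expect the real difficulty to lie; once $\mathcal{S}^\infty=\{0\}$ is in hand, the stabilization of the powers supplies the exponent $N$ with $\mathcal{S}^N=\{0\}$. As an alternative endgame, after element-wise nilpotence is secured one could instead invoke the nilpotency theorem of \cite{garimella1993nilpotency} to pass from a closed ideal all of whose elements are nilpotent to a genuine uniform exponent.
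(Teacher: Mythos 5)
Your opening step is sound and matches the paper: $\mathcal{S}$ is a closed ideal satisfying the stability condition (1.1), so Proposition 3.3 together with Theorem 3.2 places $\mathcal{S}$ inside every non-maximal closed prime ideal of $\mathcal{A}$. You also correctly identify the obstruction that this alone does not give element-wise nilpotence, since the nilradical is the intersection of \emph{all} primes and minimal primes need not be closed. But that is exactly where your proposal stops being a proof: the passage from ``$\mathcal{S}$ lies in every non-maximal closed prime'' to ``$\mathcal{S}^\infty=\{0\}$'' (equivalently, to element-wise nilpotence) is the entire content of the theorem, and you explicitly leave it open (``where I expect the real difficulty to lie''). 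Moreover, the mechanism you sketch for closing it is off target: an inaccessible prime extracted from a nonzero $\mathcal{S}^\infty$ would only contradict Theorem 3.2 if that prime were \emph{closed} and non-maximal, whereas the primes that arise in this kind of argument (primes minimal over annihilators, or primes maximal with respect to missing the powers of a non-nilpotent element of $\mathcal{S}$) are precisely the ones with no reason to be closed. The standard bridge is different: one passes to a quotient of $\mathcal{A}$ which is an integral domain (or semiprime) and uses continuity of the induced derivation or epimorphism there.

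That is in fact what the paper does. Its proof of Theorem 3.6 is a two-citation argument: Proposition 2.4 of \cite{garimella1993nilpotency} (or Theorem 1 of \cite{Runde1991AutomaticCO}) is a Cusack-type reduction which, now that accessibility makes derivations on unital integral domains and semiprime algebras continuous (Theorems 3.4 and 3.5), yields that every element of $\mathcal{S}$ is nilpotent; then, since $\mathcal{S}$ is closed and hence itself a Banach algebra, Grabiner's theorem \cite{grabiner1969nilpotency} converts element-wise nilpotence into a uniform exponent $N$ with $\mathcal{S}^N=\{0\}$. (Note the paper's inverted terminology: it calls an ideal ``nilpotent'' when each element is nilpotent, and ``nil'' when a power of the ideal vanishes.) Your closing sentence gestures at the right endgame, but it attributes the element-wise-to-uniform step to \cite{garimella1993nilpotency} rather than \cite{grabiner1969nilpotency}, and in any case it is conditional on the element-wise nilpotence you never establish. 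So the proposal, as written, has a genuine gap at the crux of the argument.
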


\begin{proof}
Once again, from Proposition (2.4) of \cite{garimella1993nilpotency} or Theorem 1 of \cite{Runde1991AutomaticCO}, it follows that $\mathcal{S}(D)$ is a nilpotent ideal. Since $\mathcal{S}(D)$ is a closed ideal, the result follows from \cite{grabiner1969nilpotency}.
\end{proof}

\begin{theorem}{3.7}
\label{thrm6}
Every epimorphism from a commutative unital Banach algebra onto another commutative unital Banach algebra, which is an integral domain, is continuous.      
\end{theorem}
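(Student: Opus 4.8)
The plan is to run the separating-ideal argument of Theorem~\eqref{thrm4} (the derivation case) verbatim, transplanted to the epimorphism setting, since the two proofs share the same engine. First I would fix an epimorphism $h\colon \mathcal{B} \to \mathcal{A}$ onto the commutative unital Banach algebra $\mathcal{A}$, which is an integral domain, and pass to its separating ideal $\mathcal{S}(h) = \{x \in \mathcal{A} : \exists\, x_n \to 0 \text{ in } \mathcal{B},\ h(x_n) \to x \text{ in } \mathcal{A}\}$. As recorded in the introduction, $\mathcal{S}(h)$ is a closed ideal of the \emph{range} algebra $\mathcal{A}$, it satisfies the stability condition (1.1), and by the closed graph theorem $h$ is continuous if and only if $\mathcal{S}(h) = \{0\}$. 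Thus the entire problem reduces to showing $\mathcal{S}(h) = \{0\}$.

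Second, I would locate the accessible prime ideal to exploit. Since $\mathcal{A}$ is an integral domain, the zero ideal is prime. Because $\mathcal{A}$ is infinite-dimensional it cannot be a field (Gelfand--Mazur would force $\mathcal{A} \cong \mathbb{C}$), so the zero ideal is non-maximal and the notion of accessibility applies to it. By Theorem~\eqref{thrm1}, the zero ideal of $\mathcal{A}$ is therefore an accessible prime ideal.

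Third, I would invoke Proposition~\eqref{prop3b}: any closed ideal satisfying the stability condition (1.1) is contained in every accessible prime ideal. Applying this to $\mathcal{S}(h)$ with the accessible prime ideal $\{0\}$ yields $\mathcal{S}(h) \subseteq \{0\}$, hence $\mathcal{S}(h) = \{0\}$, and the closed graph theorem then gives continuity of $h$, completing the proof.

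The substantive content is already delivered by Theorem~\eqref{thrm1} and Proposition~\eqref{prop3b}, so the only point requiring care is the foundational claim that the separating-ideal machinery really does carry over from derivations to epimorphisms: namely that $\mathcal{S}(h)$ is a closed ideal of the range algebra $\mathcal{A}$ and that it obeys the stability property (1.1). I expect this to be the main (and essentially the only) obstacle, and I would address it by appeal to Sinclair's general treatment of separating ideals, where the stability property is established for separating ideals of both epimorphisms and derivations, together with the standard verification that $\mathcal{S}(h)$ is a closed ideal. With those facts in hand, the argument is a direct parallel of the derivation case.
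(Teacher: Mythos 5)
Your proposal is correct and follows essentially the same route as the paper: the paper's proof also notes that the zero ideal of the integral-domain range is an accessible prime ideal (via Theorem \eqref{thrm1}), that the separating ideal of the epimorphism is closed and satisfies the stability condition (1.1), and then applies Proposition \eqref{prop3b} to conclude the separating ideal is zero, giving continuity by the closed graph theorem. Your additional remarks (Gelfand--Mazur ensuring the zero ideal is non-maximal, and Sinclair's treatment justifying that the separating ideal of an epimorphism is closed and stable) only make explicit points the paper leaves implicit via its standing infinite-dimensionality assumption and the introduction's citation of \cite{sinclair1976automatic}.
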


\begin{proof}
Since the range is an integral domain, the zero ideal is an accessible prime ideal. Since the separating ideal of the epimorphism satisfies the stability condition (1.1), the result follows from Proposition \eqref{prop3}.
\end{proof}

\section{Final Remarks}

We conclude the paper by presenting examples of unital commutative Banach algebras that satisfy the hypotheses of the main results. \\

Let $\omega$ be a weight over the set of non-negative integers, $\mathbb{Z}^{+}$; that is, $\omega$ is a positive-valued function on $\mathbb{Z}^{+}$ such that $\omega(t_1 + t_2) \leq \omega(t_1)\omega(t_2)$ for all $t_1, t_2 \in \mathbb{Z}^{+}$. We assume $\omega(0) = 1$. A weight $\omega$ over $\mathbb{Z}^{+}$ is said to be \textit{radical} if $\lim_{n \to \infty} (\omega(nt))^{\frac{1}{n}} = 0$ for all $t \in \mathbb{Z}^{+}$.  

For any weight $\omega$ over $\mathbb{Z}^{+}$, let $l^{1}(\mathbb{Z}^{+}, \omega)$ denote the space of all sequences $\{\lambda_n\}_{n \in \mathbb{Z}^{+}}$ of complex numbers such that $\{\lambda_n \omega(n)\}_{n \in \mathbb{Z}^{+}}$ is absolutely summable. Define $\|\{\lambda_n\}\| = \sum_{n=0}^\infty |\lambda_n|\omega(n)$. For $u = \{\lambda_n\}$ and $v = \{\mu_n\} \in l^{1}(\mathbb{Z}^{+}, \omega)$, define their product $u \cdot v = \{\sigma_n\}$, where $\sigma_n = \sum_{r+s=n} \lambda_r \mu_s$. Under this norm and multiplication, $l^{1}(\mathbb{Z}^{+}, \omega)$ forms a unital separable Banach algebra.  

For weights of the form $\omega(n) = e^{-n^c}$ ($c > 1$), $\omega(n) = e^{-n(\log(n+1))^c}$ ($c > 0$), or $\omega(n) = e^{-n\log\log(n+2)}$, the only prime ideals of $l^{1}(\mathbb{Z}^{+}, \omega)$ are the zero ideal and the unique maximal ideal $M_{\omega} := \{\{a_n\}: a_0 = 0\}$. For details, see Theorem 4.6.24 of \cite{dales2000banach}. Notably, this algebra has no non-closed prime ideals, corresponding to Case I of the main results. Moreover, by Theorem \eqref{thrm6}, any epimorphism $h$ from $l^{1}(\mathbb{Z}^{+}, \omega)$ onto a commutative unital Banach algebra $\mathcal{B}$, which is an integral domain, is continuous. Since the kernel of $h$ is a prime ideal of $l^{1}(\mathbb{Z}^{+}, \omega)$, $h$ is either an isomorphism or $\mathcal{B}$ is one-dimensional. \\

L
et $\mathcal{K}$ be an infinite compact space, and let $\mathcal{C(K)}$ denote the commutative unital Banach algebra of continuous complex-valued functions on $\mathcal{K}$ under the supremum norm.  It is well known that  $\mathcal{K}$ contains many non-closed prime ideals. However, this algebra is not an integral domain; it is, nonetheless, semi-simple, meaning the intersection of all maximal ideals is the zero ideal.  Therefore, it would be interesting to construct concrete examples of commutative Banach algebras that are not semi-simple, satisfying Case II and Case III of Theorem \eqref{thrm1}. In particular, this involves finding non-semi-simple commutative unital Banach algebras that are integral domains and have either countably infinite or unaccountably many non-closed prime ideals.\\

\bibliography{paper}

\providecommand{\bysame}{\leavevmode\hbox to3em{\hrulefill}\thinspace}
\providecommand{\MR}{\relax\ifhmode\unskip\space\fi MR }
\providecommand{\MRhref}[2]{%
  \href{http://www.ams.org/mathscinet-getitem?mr=#1}{#2}
}
\providecommand{\href}[2]{#2}
\begin{thebibliography}{10}

\bibitem{bade1978prime}
W.~Bade and P.C. Curtis~Jr, \emph{Prime ideals and automatic continuity problems for {B}anach algebras}, J. Functional Analysis \textbf{29} (1978), no.~1, 88--103.

\bibitem{Curtis1981Derivations}
P.C. Curtis, (Jr.), \emph{Derivations on commutative {B}anach algebras,}, Proc.Long Beach 1981, Springer-Verlag Lecture Notes in Math 975 (1981), 328--333.

\bibitem{cusack1977automatic}
J.~Cusack, \emph{Automatic continuity and topologically simple radical {B}anach algebras}, J.London Math. Soc. \textbf{2} (1977), no.~3, 493--500.

\bibitem{dales2000banach}
H.G. Dales, \emph{Banach algebras and automatic continuity}, Clarendon Press, 2000.

\bibitem{esterle1983elements}
J.~Esterle, \emph{Elements for a classification of commutative radical {B}anach algebras}, Proc.Long Beach 1981, Springer-Verlag Lecture Notes in Math 975 (1981), 4--65.

\bibitem{garimella1993nilpotency}
R.~Garimella, \emph{On nilpotency of the separating ideal of a derivation}, Proceedings of the American Mathematical Society \textbf{117} (1993), no.~1, 167--174.

\bibitem{grabiner1969nilpotency}
S.~Grabiner, \emph{The nilpotency of banach nil algebras}, Proc. Amer. Math. Soc, vol.~21, 1969, p.~1969.

\bibitem{hungerford2012abstract}
T.~Hungerford, \emph{Abstract algebra: {A}n {i}ntroduction}, Cengage Learning, 2012.

\bibitem{jewell1977continuity}
N.~Jewell, \emph{Continuity of module and higher derivations}, Pacific J. Math. \textbf{68} (1977), no.~1, 91--98.

\bibitem{johnson1969continuity}
B.E. Johnson, \emph{Continuity of derivations on commutative algebras}, American Journal of Mathematics \textbf{91} (1969), no.~1, 1--10.

\bibitem{pham2008kernels}
H.~Pham, \emph{The kernels of radical homomorphisms and intersections of prime ideals}, Trans. Amer. Math. Soci. \textbf{360} (2008), no.~2, 1057--1088.

\bibitem{le2010kernels}
\bysame, \emph{The kernels and continuity ideals of homomorphisms from $\mathcal{C}_0(\omega$)}, J. Aust. Math Soci. \textbf{88} (2010), no.~1, 103--130.

\bibitem{Runde1991AutomaticCO}
V.~Runde, \emph{Automatic continuity of derivations and epimorphisms}, Pacific Journal of Mathematics \textbf{147} (1991), 365--374.

\bibitem{silovregular}
G.~Silov, \emph{On regular normed rings, trav. inst. math. stekloff, 21 (1947).}

\bibitem{sinclair1976automatic}
A.~M. Sinclair, \emph{Automatic continuity of linear operators}, vol.~21, Cambridge University Press, 1976.

\bibitem{singer1955derivations}
I.~M. Singer and J.~Wermer, \emph{Derivations on commutative normed algebras}, Mathematische Annalen \textbf{129} (1955), no.~1, 260--264.

\bibitem{thomas1988image}
M.~P Thomas, \emph{The image of a derivation is contained in the radical}, Annals of Mathematics (1988), 435--460.

\end{thebibliography}

\bibliographystyle{amsplain}

\end{document}